\pdfoutput=1
\documentclass[a4paper]{amsart}
\usepackage{amssymb}
\usepackage[T1]{fontenc}
\usepackage{lmodern}
\usepackage{mathrsfs}
\usepackage[stretch=10, shrink=10]{microtype}
\usepackage{xcolor}
\usepackage{enumerate}
\usepackage{graphicx}
\usepackage[backend=bibtex8]{biblatex}
\addbibresource{thebib.bib}
\usepackage[colorlinks,
linkcolor={red!50!black}, citecolor={green!40!black}, urlcolor={blue!50!black}
]{hyperref}

\newtheorem{thm}{Theorem}[section]
\newtheorem{cor}[thm]{Corollary}
\newtheorem{lem}[thm]{Lemma}

\newtheorem*{prop*}{Proposition}
\newtheorem*{fact*}{Fact}
\newtheorem*{claim*}{Claim}

\theoremstyle{definition}

\newtheorem{rem}{Remark}

\title[Fra\"iss\'e
  limit of finite Heyting algebras]{The small index property of the Fra\"iss\'e
  limit of finite Heyting algebras}
\author{Kentar\^o Yamamoto}
\keywords{Small index property, automorphism groups, Fraïssé limits, Heyting algebras,
Polish groups}
\address{Institute of Computer Science of the Czech Academy of Sciences\\
  Pod Vod\'arenskou v\v{e}\v{z}\'i  271/2\\
  Libe\v{n}\\
  182 07 Prague\\ The Czech Republic}
\subjclass[2020]{20B07, 03C15, 06D20}

\newcommand{\continuum}{2^{\aleph_0}}
\newcommand{\inv}{{-1}}
\newcommand{\FR}{Fra\"iss\'e}
\newcommand{\tp}{\mathrm{tp}}
\newcommand{\sunim}{\mathbin{\rightarrow}}

\newcommand{\0}{{\mathclap{\phantom{\emptyset}}0}}
\IfFileExists{emptysetiszero}{}{\renewcommand{\0}{\emptyset}}
\newcommand{\power}{\mathscr{P}}

\newcommand{\Aut}{\mathrm{Aut}}

\newcommand{\indep}[1][]{%
  \mathrel{
    \mathop{
      \vcenter{
        \hbox{\oalign{\noalign{\kern-.3ex}\hfil$\vert$\hfil\cr
            \noalign{\kern-.7ex}
            $\smile$\cr\noalign{\kern-.3ex}}}
      }
    }\displaylimits_{#1}
  }
}
\newcommand{\gen}[1]{\langle#1\rangle}
\newcommand{\dcl}{{\mathrm{dcl}}}

\DeclareMathOperator{\dom}{dom}

\DeclareMathOperator{\ran}{ran}
\DeclareMathOperator{\supp}{supp}

\hyphenation{Heyting}
\begin{document}
\hyphenation{super-amalga-mation}
\hyphenation{auto-homeo-morphisms}
\hyphenation{ultra-homo-gene-ous}
\hyphenation{non-empty}
\begin{abstract}
  We show that if a subgroup of the automorphism group of the \FR{} limit of
  finite Heyting algebras has a countable index,
  then it lies between the pointwise and setwise stabilizer of some finite set. 
\end{abstract}
\maketitle
\section{Introduction}
The study of automorphism groups of countable ultrahomogeneous structures
has been carried out in the field for some time.
Here, a structure is ultrahomogeneous,
or homogeneous in the sense of \FR{}~\cite{fraisse53:_sur_certain_relat_gener_nombr_ration},
if every isomorphism between finitely generated substructures
can be extended to an automorphism of the structure.
One theme that frequents in this line of research is
the \emph{strong small index property} of an automorphism group:
$\Aut(M)$ for some structure $M$ has the strong small index property
if every subgroup of countable index sits
between the pointwise stabilizer of some finite set in $M$
and the setwise stabilizer of the same set.
(We simply say that $M$ has the said property in that case.)
Since every open nontrivial subgroup of $\Aut(M)$ as a topological group
with the pointwise convergence topology
has a countable index if $M$ is countable,
the topology of $\Aut(M)$ with the strong small index property
can be recovered from its abstract group structure alone.
Dixon, Neumann, and Thomas~\cite{dixon86:_subgr_small_index_infin_symmet_group} showed the strong small index property
of the pure set~$\aleph_0$,
and Truss~\cite{TRUSS1989494} established it for the countable atomless Boolean algebra.
Another work more related to ours is by Droste and Macpherson~\cite{droste00:_autom_group_univer_distr_lattic},
who showed the strong small index property
for the universal ultrahomogeneous distributive lattice.
The goal of this article is to show an analogous statement
for Heyting algebras.

The central object of study of the present article is
the \FR{} limit $L$ of finite Heyting algebras.
It is the unique (up to isomorphism) countable ultrahomogeneous structure
into which all nontrivial finite Heyting algebras
embed.
Its existence and uniqueness follows from a general fact~\cite{fraisse53:_sur_certain_relat_gener_nombr_ration} in model theory
and the amalgamation property of the class of finite Heyting algebras
(in fact, a stronger property called the \emph{superamalgamation property}~\cite{Maksimova1977} holds in the class).
Recall that Heyting algebras are bounded distributive lattices $H$ in which
the minimum element (written $x \sunim y$)
of $\{z \in H \mid x \wedge z \le y\}$ exists
for every $x, y \in H$;
they are for our purposes structures in the language $\sigma_{\mathrm{HA}} = \{0, 1, \wedge, \vee, \sunim\}$.
The interest in studying the \FR{} limit $L$ of finite Heyting algebras
and its automorphism group is twofold.
First,
since there are unboundedly large $1$-generated finite Heyting algebra,
$L$ is not $\omega$-categorical
unlike aforementioned better-known ultrahomogeneous structures.
Hence, we provide a novel case study in the context of research in
automorphism groups of ultrahomogeneous structures.
Secondly, $L$ can be thought of as a midway between the said examples
and the universal ultrahomogeneous poset,
whose small index property is usually believed but has not yet been proved.
In fact,
$\Aut(L)$ can be thought of the automorphism group of the  ``random poset''
in a different sense:
there is a structure with the same automorphism group as $L$,
which is called the Esakia space of $L$ in \S~\ref{sec:preliminaries},
whose quotients exhaust all finite posets
(see also Remark~\ref{rem:projective}). 

Our main result is Theorem~\ref{thm:ssip},
in which we prove that $L$ has the strong small index property.
The proof is based on Truss's method~\cite{TRUSS1989494}.
After that we also prove a model-theoretic consequence of the Theorem
regarding a weak form of elimination of imaginaries (Corollary~\ref{cor:wei}).

In the final section we discuss newer and more systematic methodology
in establishing the strong amalgamation property
and the apparent difficulty in applying it to the present problem.

\section{Preliminaries}\label{sec:preliminaries}
A \emph{lower amalgam} (or an amalgam to some authors, e.g., \cite{Maksimova1977}) is
a pair of embeddings from the same structure.
A lower amalgam $A_1 \stackrel{\iota_1}{\hookleftarrow} A_0 \stackrel{\iota_2}{\hookrightarrow} A_2$
is \emph{completed} by a structure $A$
if there are embeddings $e_i : A_i \hookrightarrow A$ ($i < 2$)
such that $e_1 \circ \iota_1 = e_2 \circ \iota_2$.
A lower amalgam is in the \emph{normal form}  if $A_0 = A_1 \cap A_2$
and both $\iota_1, \iota_2$ are inclusion maps.
A class of structures $\mathcal K$ has the \emph{amalgamation property}
if any lower amalgam consisting of embeddings in $\mathcal K$
can be completed by a structure in $\mathcal K$.
If $\mathcal K$ is closed under isomorphisms,
which we will assume tacitly hereafter,
then $\mathcal K$ has the amalgamation property
whenever each lower amalgams of the normal form in $\mathcal K$ can be completed there.
Therefore, unless otherwise stated,
we mean lower amalgam of the normal form by ``lower amalgams'' \emph{simpliciter}.

It now makes sense to discuss stronger forms of this property, \emph{viz}.,
the \emph{strong amalgamation property} and the \emph{superamalgamation property}.
A class of structures $\mathcal K$ has the strong amalgamation property
if in the definition of completion of amalgams above,
$e_1$ and $e_2$ can be chosen so that $e_1(\iota_1(A_0)) = e_2(\iota_2(A_0))$.
If $\mathcal K$ has the strong amalgamation property,
then
$e_1$ and $e_2$ may be taken to be inclusion maps without loss of generality.
The class $\mathcal K$ has the superamalgamation property
if every lower amalgam $A_1 \stackrel{\iota_1}{\hookleftarrow} A_0 \stackrel{\iota_2}{\hookrightarrow} A_2$ in $\mathcal K$ can be completed
by $A \in \mathcal K$ and inclusion maps so that $A_1 \indep'_{A_0} A_2$,
which is defined to be equivalent to
\[
  \bigvee_{(i, j) = (1, 2), (2, 1)}
  (\forall a_i \in A_i) (\forall a_j \in A_j) [a_i \le a_j \implies
  \mathop{(\exists a_0 \in A_0)} a_i \le a_0 \le a_j],
\]
where $\le$ is the partial order defined on $A$,
which induces the same partial orders on $A_i$ ($i = 0, 1, 2$).

Suppose that a class $\mathcal K$ of finitely generated structures
has the amalgamation property in general,
is closed under substructures,
has the joint embedding property
(i.e., every two structures in $\mathcal K$ have a common extension in $\mathcal K$),
and has at most countably many structures up to isomorphism.
That is, $\mathcal K$ is a \emph{\FR{} class}.
Then by a theorem of \FR~\cite{fraisse53:_sur_certain_relat_gener_nombr_ration},
there is a unique countable structure $L$, the \emph{\FR{} limit} of $\mathcal K$,
such that the class of finitely generated substructures, or the \emph{age},
of $L$ is exactly $\mathcal K$,
and that $L$ is \emph{ultrahomogeneous}:
every isomorphism between finitely generated substructures of $L$
can be extended to an automorphism of $L$.

Maksimova~\cite{Maksimova1977} studied the three variants of the amalgamation property
for classes of Heyting algebras
and showed, among other things,
that the class of all Heyting algebras has the superamalgamation property.
Her argument actually shows that the class $\mathcal K$ of \emph{finite} Heyting algebras
has the superamalgamation property
(and thus the strong amalgamation property),
from which it easily follows that $\mathcal K$ is a \FR{} class.
Let $L$ be its \FR{} limit.
Let $X$ be the Esakia space 
of $L$ (\cite{Esakia2019}).
Concretely,
$X$ is the set of prime filters of $L$,
equipped with the partial order of set inclusion
and
the topology generated by sets of the form $\widehat a$
or $X \setminus \widehat a$ for $a \in L$,
where $\widehat a = \{x \in X \mid a \in x\}$.
This is the \emph{patch topology} on $X$.
One can show \cite[Lemma 2.3.2]{yamamotoar:_autom_fra_limit_heytin_algeb} that \emph{the Boolean envelope} of $B(L)$,
or the Stone dual of the topological space $X$,
is simply the countable atomless Boolean algebra.
Hence, as a topological space $X$ is the Cantor space.
There is another, coarser topology on $X$
that is generated by sets of the form $\widehat a$,
which will then be exactly the compact open subsets
(see, e.g., Cornish~\cite{cornish75:_h}).
This is the \emph{spectral topology} on $X$.
Unless otherwise stated, we equip $X$ with the patch topology.

Consider ${}^\circ : B(L) \to B(L)$ that maps a clopen subset of $X$
to the maximum up-set contained in $X$, which is clopen.
This map is intensive, idempotent, and preserves possibly empty finite meets,
i.e., $(B(L), {}^\circ)$ is an \emph{interior algebra} \cite{blok76:_variet_inter_algeb},
which is a structure in the language $\sigma_{\mathrm{int}} = \{0, 1, \wedge, \vee, \neg, {}^\circ\}$.
Moreover, $(B(L), {}^\circ)$
is \emph{skeletal}~\cite{Esakia2019} in that $B(L)$ is generated as a Boolean algebra
by $B(L)^\circ = L$.
One can easily see that the age of $(B(L), {}^\circ)$ has
the superamalgamation property.
Let $G$ be $\Aut(L) = \Aut(X)$.
In \cite{yamamotoar:_autom_fra_limit_heytin_algeb}, the superamalgamation property of $L$
and a resulting stationary independent relation $\indep$ \cite{CALDERONI202143}
were used to prove the simplicity of $G$.
Here, $\indep$ is a ternary relation among finite subsets
(not necessarily subalgebras) of $L$ defined so that 
$A_1 \indep_{A_0} A_2$ is equivalent to
$\langle A_1 A_0 \rangle^L \indep'_{\langle A_0\rangle^L} \langle A_0A_2 \rangle^L$.

The argument used there
is also applicable to the automorphism group of an arbitrary
countable ultrahomogeneous Heyting algebra whose age has
the superamalgamation property
and hence to that of an arbitrary countable ultrahomogeneous
skeletal interior algebra with the same condition
due to the isomorphism of the category of Heyting algebras
and that of skeletal interior algebras.

We will not go into details of the Esakia duality.
We, however,
note the following facts that can easily be seen from the definitions
(see, e.g., \cite{Esakia2019}):
\begin{enumerate}
\item
  If $X_1 \subseteq X_2$ are the Esakia spaces of $L_1$ and $L_2$,
  respectively, and
  $X_1$ is a clopen up-set in $X_2$,
  then the inclusion map $X_1 \hookrightarrow X_2$ is an \emph{Esakia space morphism},
  i.e., a function induced by some Heyting algebra homomorphism $L_2 \to L_1$.
\item \sloppy
  The automorphisms on $L$ induce exactly the
  autohomeomorphisms (with either of the two topologies) that are
  automorphisms of the partial order reduct. \fussy
\end{enumerate}
\begin{rem}\label{rem:projective}
  The sense in which $\Aut(L)$ is the automorphism group of the ``random poset''
  of some sort is as follows.
  The Esakia duality restricts to the duality between finite Heyting algebras
  and finite Esakia spaces,
  which are exactly the finite posets (with discrete topology).
  In projective \FR{} theory~\cite{irwin06:_projec_fraïs_limit_pseud_arc}
  that replaces embeddings in the classical \FR{} theory
  by surjective continuous maps,
  it makes sense to talk about the projective \FR{} limit $\mathbb P$
  of the finite Esakia spaces.
  One observes that
  the group of autohomeomorphisms of $\mathbb P$ that are order-automorphisms
  is topologically isomorphic to $\Aut(L)$.
\end{rem}
Whenever
$Y \subseteq X$, we write $\Aut_X(Y)$ for
$\{g \in \Aut(X) \mid \supp g \subseteq Y\}$.
Given a family $F$ of automorphisms of $X$,
an \emph{agglutinate}~$\sigma: X \to X$ of $F$ is an automorphism of $X$
such that $\sigma(x) = \sigma'(x)$ where $\sigma'$ is the (unique) automorphism in $F$
whose support contains $x$ if such  $\sigma'$ exists,
and that $\sigma(x) = x$ otherwise.

\section{Small index property}

\begin{lem}\label{lem:homog}
  Let $Y \subseteq X$ be clopen and nonempty.
  The set $Y$ equipped with the induced topology and order
  is an Esakia space dual to a skeletal  interior algebra
  whose age has the superamalgamation property.
  \emph{A fortiori},
  $\Aut_X(Y)$ is simple.
\end{lem}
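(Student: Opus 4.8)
The proof follows the two sentences of the statement; the first is largely formal via Esakia duality, and the substance lies in computing the age and, for the \emph{a fortiori}, in controlling supports. I would first verify that $Y$, with the induced topology and order, is an Esakia space. A closed subspace -- hence in particular a clopen subspace -- of a Priestley space is Priestley, since compactness and total order-disconnectedness are inherited, and the Esakia condition passes to $Y$ because for $V\subseteq Y$ clopen in $Y$ (and so clopen in $X$, as $Y$ is clopen) the downward closure of $V$ computed in $Y$ equals the intersection of $Y$ with the downward closure of $V$ computed in $X$, which is clopen by the Esakia condition for $X$. One then equips $B(Y)$, the Boolean algebra of clopen subsets of $Y$, with the operator ${}^\circ$ carrying a clopen set to the largest clopen up-set inside it (well defined by the same computation); $(B(Y),{}^\circ)$ is an interior algebra, it is countable since $X$ is second countable, and it is skeletal because the Stone-dual surjection $B(L)\to B(Y)$, $a\mapsto a\cap Y$, is a Boolean homomorphism taking clopen up-sets of $X$ to clopen up-sets of $Y$, and $B(L)$ is generated by its clopen up-sets. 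Under the equivalence between skeletal interior algebras and Heyting algebras, $(B(Y),{}^\circ)$ corresponds to the Heyting algebra $L_Y$ of clopen up-sets of $Y$.

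To handle the age it is enough, by Maksimova's superamalgamation theorem for finite Heyting algebras \cite{Maksimova1977}, to show that the age of $L_Y$ is the class of \emph{all} finite Heyting algebras. One inclusion is trivial; for the other, by Esakia duality one must show that every finite poset $P$ is the image of $Y$ under a continuous surjective p-morphism. Present $X$ as the projective \FR{} limit $\varprojlim_n P_n$ of finite posets (Remark~\ref{rem:projective}) with canonical projections $f_n\colon X\to P_n$, so that $Y=f_n^{-1}(S)$ for some $n$ and some $S\subseteq P_n$. The first projection $P_n\times P\to P_n$, where $P_n\times P$ carries the product order, is a surjective p-morphism, so the extension property of the projective \FR{} limit yields a continuous surjective p-morphism $g\colon X\to P_n\times P$ whose composite with the first projection is $f_n$. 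Then $g^{-1}(S\times P)=Y$, the restriction of $g$ to $Y$ is a continuous surjective p-morphism onto $S\times P$ (the restriction of a surjective p-morphism to the preimage of a subset, taken onto that subset, is again a surjective p-morphism), and composing with the projection to $P$ gives the desired surjection $Y\to P$. This completes the first sentence.

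For the \emph{a fortiori} I would run the argument of \cite{yamamotoar:_autom_fra_limit_heytin_algeb}, which derives simplicity of an automorphism group from the stationary independence relation supplied by a superamalgamable age -- here with the structure $Y$, the group $\Aut_X(Y)$, and the restriction of $\indep$ to finite subsets of $Y$. The point that requires genuine work, and which I expect to be the main obstacle, is that $\Aut_X(Y)$ is homogeneous enough on $Y$: every isomorphism between finite substructures of $Y$, and more generally every stage of the back-and-forth behind the simplicity argument, must be realized by an automorphism of $X$ whose support is contained in $Y$. Ultrahomogeneity of $L$ realizes it by \emph{some} automorphism of $X$, but to keep $X\setminus Y$ pointwise fixed one must amalgamate the altered copy of $Y$ with the frozen part $X\setminus Y$ over the relations linking them, and the availability of such an amalgam inside $X$ is exactly the superamalgamation property of $L$ (equivalently, the stationarity of the associated independence relation) applied to the decomposition $X=Y\sqcup(X\setminus Y)$. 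Concretely I would organize this as a back-and-forth inside the inverse limit in which, at each step, the image of a new clopen block of $Y$ is chosen so as to respect both the prescribed partial isomorphism and the unchanged relations to $X\setminus Y$; granted this support-controlled homogeneity, the simplicity argument of \cite{yamamotoar:_autom_fra_limit_heytin_algeb} goes through.
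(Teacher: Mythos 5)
Your route is genuinely different from the paper's, and its first half has a load-bearing gap. The paper never computes the age of the dual algebra $A$ of $Y$: it completes an arbitrary lower amalgam from that age \emph{inside} $A$, by translating the quantifier-free diagram of the amalgam into $L$ (relativizing to the element $a \in B(L)$ corresponding to $Y$) and invoking the Existence axiom of the stationary independence relation $\indep$ supplied by the superamalgamation of the age of $L$. You instead try to identify the age of $L_Y$ with the class of \emph{all} finite Heyting algebras and then quote Maksimova. The inclusion you call trivial --- that every finitely generated subalgebra of $L_Y$ (equivalently, of the dual skeletal interior algebra of $Y$) is finite --- is not trivial: finitely generated Heyting algebras and interior algebras are in general infinite, and the local finiteness of $L$ itself is not an algebraic formality but a consequence of $L$ being the \FR{} limit of the finite algebras; nothing in your argument transfers it to the relativized algebra attached to a clopen $Y$. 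Without it, the age of $L_Y$ could contain infinite finitely generated algebras, about which Maksimova's superamalgamation theorem for finite algebras says nothing, so your reduction collapses at exactly the point you wave through. Your argument for the other inclusion (every finite poset is a p-morphic image of $Y$, via presenting $X$ as an inverse limit of finite posets with the extension property and pulling back along the projection to $P_n\times P$) is essentially sound, though that presentation is only gestured at in Remark~\ref{rem:projective} and would have to be set up, e.g.\ from $L=\bigcup_n L_n$ with $L_n$ finite plus weak homogeneity; also the lemma speaks of the age of the \emph{interior} algebra dual to $Y$, so a word is needed to pass between that age and the age of $L_Y$.

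For the \emph{a fortiori} part the paper does none of what you propose: it simply applies the simplicity theorem of \cite{yamamotoar:_autom_fra_limit_heytin_algeb}, as recalled in \S~\ref{sec:preliminaries}, to the countable ultrahomogeneous skeletal interior algebra dual to $Y$, once its age is known to be superamalgamable. Your plan --- to prove a support-controlled homogeneity, namely that every isomorphism between finite substructures of $Y$ is realized by an automorphism of $X$ with support in $Y$ --- is left as a promissory note, and as stated it is a substantially stronger claim than anything the cited machinery gives: any $\sigma$ with $\supp\sigma\subseteq Y$ must fix setwise every trace $\mathop{\uparrow}x\cap Y$ and $\mathop{\downarrow}x\cap Y$ for $x\in X\setminus Y$, so realizing an arbitrary finite partial isomorphism with support in $Y$ is constrained by data invisible to the finite configuration, and the phrase ``superamalgamation applied to the decomposition $X=Y\sqcup(X\setminus Y)$'' has no meaning in this paper, where superamalgamation and $\indep$ are properties of finite configurations in the age, not of infinite partitions of $X$. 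So the second half of your proposal rests on an unproved (and, in this form, dubious) homogeneity claim; the intended argument is just to feed the first half into the already-established simplicity theorem.
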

\begin{proof}
  Since $Y$ is clopen, it is an Esakia space \cite[]{Esakia2019},
  and it is the dual of some countable skeletal interior algebra $A$.
  We may identify $A$ as the induced partial order on $\mathop{\downarrow} a$
  for some $a \in B(L)$, the Boolean envelope of $L$.

  Consider the translation ${}^*$
  from $\sigma_{\mathrm{int}}$ to $\sigma_{\mathrm{HA}}$
  replacing every occurrence of the constant~$1$ with $a$ and
  every occurrence of the function symbol $\neg(\cdot)$ by
  $\neg(\cdot) \wedge a$.
  By construction, we have
  \[
    A \models \phi \iff L \models \phi^*
  \]
  for every quantifier-free sentence $\phi$ in $\sigma_{\mathrm{int}}(A)$.
  
  We prove the superamalgamation property for the age of $A$.
  Suppose that a lower amalgam
  $A_1 \hookleftarrow A_0 \hookrightarrow A_2$ of $A$
  is given.
  For ease of notation, we assume that the embeddings in the diagram
  are inclusions.
  Fix an enmueration of elements of $A_2 \setminus A_0$,
  and let $\Phi(\bar x)$
  be the set of formulas over $A_0$
  such that $\Phi(A_2 \setminus A_0)$ is the diagram of $A_2$
  in $\sigma_{\mathrm{int}}$.
  Then $\Phi^*(\bar x)$ is the set of formulas over $aA_0$
  such that $L \models \Phi^*(A_2 \setminus A_0)$.
  By the Existence Axiom of $\indep$,
  there is $A_2' \subseteq L$ such that $L \models \Phi^*(A_2' \setminus A_0)$
  and $A_1 \indep[aA_0] A_2'$.
  By construction, $A_2'$ is below $a$,
  and $\langle A_1A_2' \rangle^A$ completes the lower amalgam.
\end{proof}

\begin{lem}\label{lem:partition}
  For each nonempty clopen upward closed $Y \subseteq X$,
  there exist nonempty clopen up-sets $U, V \subseteq Y$
  such that $U \cap V = \0$ and $U \cup V = Y$.
\end{lem}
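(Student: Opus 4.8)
The plan is to pass from $X$ to $L$ via Esakia duality, exhibit the required decomposition of $Y$ as an algebraic splitting of the element of $L$ it names, and then translate back. Recall that the clopen up-sets of $X$ are exactly the elements of $L = B(L)^\circ$, and that under the correspondence $a \mapsto \widehat a$ the operations $0, 1, \wedge, \vee$ go to $\emptyset, X, \cap, \cup$ respectively, with $\widehat a = \emptyset$ iff $a = 0$ and $\widehat a \subseteq \widehat b$ iff $a \le b$. So I would write $Y = \widehat a$ for the unique $a \in L$ representing it, note $a \ne 0$ since $Y \ne \emptyset$, and reduce the lemma to finding $b, c \in L \setminus \{0\}$ with $b \vee c = a$ and $b \wedge c = 0$. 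Indeed, for such $b, c$ the sets $U := \widehat b$ and $V := \widehat c$ are clopen up-sets, contained in $Y$ because $b, c \le a$, nonempty because $b, c \ne 0$, and they satisfy $U \cup V = \widehat{b \vee c} = Y$ and $U \cap V = \widehat{b \wedge c} = \emptyset$.

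To produce $b$ and $c$ I would use the extension property of the Fra\"iss\'e limit. Put $H_0 := \langle a \rangle^L$; this is a finite Heyting algebra, being a finitely generated substructure of $L$. Let $[0,a]_{H_0}$ denote the interval $\{x \in H_0 \mid x \le a\}$, which is again a (finite) Heyting algebra --- meets, joins and $0$ are inherited, the top is $a$, and the implication is $a \wedge (x \sunim y)$ computed in $H_0$ --- and for which $x \mapsto x \wedge a$ is a surjective Heyting homomorphism $H_0 \to [0,a]_{H_0}$. Hence
\[
  \delta \colon H_0 \longrightarrow H_1 := H_0 \times [0,a]_{H_0}, \qquad x \longmapsto (x,\, x \wedge a),
\]
is an embedding of Heyting algebras, and $H_1$ is finite. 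In $H_1$ we have $\delta(a) = (a, a) = (a, 0) \vee (0, a)$ with $(a, 0) \wedge (0, a) = 0$ and $(a,0), (0,a) \ne 0$ because $a \ne 0$. Regarding $H_0$ as a subalgebra of $H_1$ via $\delta$, the inclusion $H_0 \hookrightarrow L$ extends to an embedding $g \colon H_1 \hookrightarrow L$, since $L$ is the Fra\"iss\'e limit of the class of finite Heyting algebras and $H_1$ belongs to that class (so the extension property applies). Setting $b := g(a, 0)$ and $c := g(0, a)$ then gives $b \vee c = g(\delta(a)) = a$, $b \wedge c = g(0) = 0$, and $b, c \ne 0$ by injectivity of $g$, which completes the reduction above.

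I do not anticipate a genuine obstacle. The one place to be a little careful is the claim that $x \mapsto x \wedge a$ preserves implication; this can be checked by hand, or read off dually: the Esakia dual of $H_1$ is $P_0 \sqcup U_0$, where $P_0$ is the dual poset of $H_0$ and $U_0 \subseteq P_0$ the up-set corresponding to $a$, and $\delta$ is dual to the bounded surjection $P_0 \sqcup U_0 \to P_0$ that is the identity on $P_0$ and collapses the extra copy of $U_0$ onto $U_0 \subseteq P_0$; the preimage of $U_0$ under this map is then visibly the disjoint union $U_0 \sqcup U_0$, i.e.\ $\delta(a)$ splits. Everything else is bookkeeping with Esakia duality and the extension property of Fra\"iss\'e limits.
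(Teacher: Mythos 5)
Your proof is correct, but it takes a genuinely different route from the paper. The paper disposes of this lemma in two lines by invoking the fact that $L$ is an existentially closed Heyting algebra and citing a result of Ghilardi (Proposition A.2.(ii) of \cite{GHILARDI199727}) to produce a nonempty clopen up-set $U \subsetneq Y$ that is complemented in $L$, and then takes $V = U' \cap Y$ where $U'$ is the complement. You instead work entirely inside the Fra\"iss\'e framework: writing $Y = \widehat a$, you split $a$ by embedding $H_0 = \langle a\rangle^L$ (finite, since the age of $L$ consists of finite algebras) into the finite algebra $H_0 \times [0,a]_{H_0}$ via $x \mapsto (x, x\wedge a)$, and then use the extension property (weak homogeneity) of $L$ to realize the splitting $(a,0)\vee(0,a) = \delta(a)$, $(a,0)\wedge(0,a)=0$ inside $L$ over $H_0$. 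The verifications you flag do go through: $x \mapsto x\wedge a$ is indeed a Heyting homomorphism onto the interval algebra (equivalently, your dual description as inclusion of the clopen up-set $U_0$, which is automatically a p-morphism), so $\delta$ is an embedding, and the standard correspondences $\widehat{b\vee c} = \widehat b \cup \widehat c$, $\widehat{b\wedge c} = \widehat b \cap \widehat c$, $\widehat b \neq \emptyset \iff b \neq 0$ finish the translation. What your argument buys is self-containedness --- it needs only the extension property and basic duality, not the existential closedness of $L$ or the cited result --- at the cost of a longer verification; the paper's version is shorter but leans on an external fact about existentially closed Heyting algebras.
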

\begin{proof}
   Since $L$ is an existentially closed Heyting algebra,
   one can take \cite[Proposition A.2.(ii)]{GHILARDI199727}
   a nonempty clopen up-set $U \subsetneq Y$
  which as an element of $L$ has a $L$-complement $U'$.
  Let $V = U' \cap Y$.
\end{proof}

\begin{lem}\label{lem:trans}
  Let $Y \subseteq X$ be nonempty and finite.
  Then the $G$-orbit $G \cdot Y$
  of $Y$ in the natural action $G \curvearrowright \power(X)$
  is of cardinality continuum.
\end{lem}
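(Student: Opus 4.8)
The plan is to translate the cardinality statement into a statement about the non-openness of a point stabiliser, and then to settle that by a genuine use of the existential closedness of $L$.

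\emph{Reductions.} Since $G=\Aut(X)$ embeds into the symmetric group of the countable set $B(L)$, it is Polish and $|G|\le\continuum$, so $|G\cdot Y|\le\continuum$. For the reverse inequality note that the orbit map $g\mapsto gY$ exhibits $G\cdot Y$ as a continuous image of the Polish group $G$ inside the Polish space of finite subsets of $X$; hence $G\cdot Y$ is analytic and, by the perfect set property for analytic sets, is either countable or of size $\continuum$. The setwise stabiliser $G_{\{Y\}}=\{g:gY=Y\}$ is a closed subgroup of $G$, and a closed subgroup of countable index in a Polish group is open; so it suffices to show that $G_{\{Y\}}$ is \emph{not} open. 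If it were, then $G_{\{Y\}}$ would act on the finite set $Y$ with finite image, so for some $y\in Y$ the stabiliser $G_y$ would contain a finite-index (hence open) subgroup and thus be open. Consequently it is enough to prove: \emph{no point of $X$ has open $G$-stabiliser}, i.e.\ for every prime filter $y$ of $L$ and every finite Heyting subalgebra $A\le L$ the pointwise stabiliser $G_{(A)}$ does not fix the point $y$.

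\emph{The crux.} Suppose toward a contradiction that $G_{(A)}$ fixes $y$. Then the filter $y$ is a union of $G_{(A)}$-orbits of $L$: for $b,c\in L$ with $\tp(b/A)=\tp(c/A)$ one has $b\in y\iff c\in y$. Now $L$ is existentially closed, so it has no join-prime elements other than $0$ and hence no principal prime filters; thus $y$ is non-principal and therefore infinite. Put $m:=\min(y\cap A)\in A$ and $w:=\bigvee\{a'\in A:a'\notin y\}\in A$; since $y$ is prime, $w\notin y$, whence $w\not\ge m$ and $m\wedge w<m$. Using non-principality of $y$ one can choose $b\in y\setminus A$ with $b<m$ and moreover $b\vee(m\wedge w)<m$; then in the finite Heyting algebra $N:=\langle A,b\rangle^L$ the filter $\uparrow m$ generated by $y\cap A$ is disjoint from the ideal $\downarrow(b\vee w)$, which contains $b$ and all of $A\setminus y$. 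By the prime filter theorem for distributive lattices there is a prime filter $r$ of $N$ with $r\supseteq\uparrow m$ and $r\cap\downarrow(b\vee w)=\emptyset$; then $r\cap A=y\cap A$ and $b\notin r$. Finally, by the extension property of the \FR{} limit $L$ — used to re-embed $N$ into $L$ over $A$, prescribing in addition that the point $y$ of $X$ be carried to $r$, a prescription consistent precisely because $r$ restricts to $y\cap A$ on $A$ — one obtains $c\in L$ with $\tp(c/A)=\tp(b/A)$ and $c\notin y$. This contradicts the $G_{(A)}$-invariance of $y$ and finishes the proof.

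\emph{The main obstacle.} The reductions of the first paragraph are soft. The content is the crux, that the pointwise stabiliser of a finite subalgebra can never fix a prime filter of $L$. Within it two points need care: (i) locating $b\in y\setminus A$ for which the separating prime filter $r$ exists — here the non-principality of $y$, itself a consequence of existential closedness, is what supplies the necessary room, and one must check that such a $b$ can always be found; and (ii) realising $r$ as the trace on $y$ of a re-embedding of $\langle A,b\rangle^L$ into $L$ over $A$, rather than merely as the trace on some other prime filter. Point (ii) is where existential closedness of $L$ is truly essential — equivalently, it is an appeal to the Existence Axiom for the stationary independence relation $\indep$, or to the projective ultrahomogeneity of $X$ — and I expect it to be the technically most delicate step.
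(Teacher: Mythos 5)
Your soft reductions are fine in outline: the orbit is either countable or of size $\continuum$, the setwise stabiliser is closed, a closed subgroup of countable index in a Polish group is open by Baire category, and so the lemma reduces to showing that no $G_{(A)}$ with $A \le L$ finite fixes a prime filter $y$ (the parenthetical ``finite-index, hence open'' should be ``closed of finite index in an open Polish subgroup, hence open by Baire,'' since finite-index subgroups of Polish groups need not be open in general; this is easily repaired because the kernel of the action of $G_{\{Y\}}$ on $Y$ is closed). The genuine gap is exactly the step you flag as (ii), and it is not merely delicate: the implication you need is false as stated. From the existence of a prime filter $r$ of $N=\gen{Ab}^L$ with $r\cap A=y\cap A$ and $b\notin r$ you cannot conclude that some realization of $\tp(b/A)$ lies outside $y$. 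Concretely, let $y$ be any prime filter of $L$ extending the proper filter of dense elements $\{c\in L:\neg c=0\}$, let $A=\{0,1\}$, and let $b\in y$ be any dense element different from $1$ (e.g.\ the middle element of a copy of the three-chain). Then $\gen{Ab}^L$ is the three-chain, $r=\{1\}$ is a prime filter over $y\cap A$ omitting $b$, and your separation condition $m\not\le b\vee w$ holds ($m=1$, $w=0$, $b<1$); yet every $c$ with $\tp(c/A)=\tp(b/A)$ satisfies $\neg c=0$ and therefore lies in $y$. So the finite-level prime-filter separation does not control the trace of the fixed infinite object $y$: membership in $y$ can be forced by the quantifier-free type itself. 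Neither of the tools you invoke can bridge this: the Existence Axiom for $\indep$ only prescribes types over finite parameter sets, and projective ultrahomogeneity moves finite quotient configurations, whereas what you need is an embedding $f:N\to L$ over $A$ with the pullback $f^{-1}(y)$ prescribed, which is a constraint involving the infinite object $y$ itself. (Your step (i) is also unsupported as written: non-principality of $y$ gives $b\in y$ with $b<m$, but the extra requirement $b\vee(m\wedge w)<m$ is equivalent to $\widehat m\setminus\widehat w\not\subseteq\mathop{\uparrow}y$, which non-principality does not yield; this particular point can be patched, e.g.\ by noting that if in addition every $b\notin y$ satisfied $b\wedge m\le w$ then $\widehat m\setminus\widehat w=\{y\}$, contradicting perfectness of $X$ --- but the patch does not rescue (ii).)

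For contrast, the paper does not argue through stabilisers at all: it proves the lemma directly by a tree-indexed back-and-forth, building a chain of finite subalgebras $A_i$ exhausting $L$ together with finite partial isomorphisms $\sigma^\rho$ ($\rho\in 2^i$) chosen via the independence relation and weak homogeneity, so that the branches yield $\continuum$ many automorphisms $\sigma^T$ with pairwise distinct images $\sigma^T\cdot y$. The control over where the point $y$ is sent is secured step by step inside that induction --- precisely the control that your appeal to existential closedness or to $\indep$ does not provide. To salvage your route you would have to prove a genuine ``extension with prescribed trace on $y$'' property (or choose $b$ so as to avoid types all of whose realizations are forced into $y$, such as dense elements above), and establishing that is essentially the same work as the paper's construction.
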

\begin{proof}
  For ease of notation, assume $|Y| = 1$
  (this argument is general, however).
  Let $y$ the prime filters in $L$ such that $Y = \{y\}$.
  We will construct $\{b^j_i\}_{i<\omega} \subseteq y$ ($j = 0, 1$),
  finite isomorphisms $\sigma^\rho$, $e_\rightarrow^\rho$, $e_\leftarrow^\rho$
  ($i < \omega$, $\rho \in  2^i$),
  $\{d_i\}_{i < \omega}, \{r_i\}_{i<\omega} \subseteq L$,
  $\{A_i\}_{i< \omega} \subseteq \mathrm{Age}(L)$
  by simultaneous induction.
  Given all objects are defined for the index $i-1$,
  first take $b^0_i \in y \setminus A_{i-1}$,
  which exists since $A_{i-1}$ is finite.
  Now take $b^1_i \indep[A_{i-1}]b^0_i$.
  Let $\tau^j$ ($j < 2$) be the automorphism of $\gen{A_{i-1}b_i^0b_i^1}$
  such that $\tau_i^j \upharpoonright A$ is the identity
  and $\tau_i^j(b_i^0b_i^1) = b_i^{j_0}b_i^{j_1}$,
  where $(j_0, j_1) = (0, 1)$ if $j=0$ and $(j_0, j_1) = (1, 0)$ otherwise.
  Now let $\sigma^{\rho j} = \tau_i^j \circ e^\rho_\leftarrow$.
  Take $d_i \in L \setminus \gen{A_{i-1}b_i^0b_i^1}$.
  Apply the weak homogeneity \cite[\S~7.1]{hodges_1993}
  to the embedding $\iota \circ (\sigma^{\rho j})^\inv$,
  where $\iota$ is the inclusion map $\gen{A_{i-1}b_i^0b_i^1d_i} \hookrightarrow$,
  to obtain the isomorphism
  $e^{\rho j}_\rightarrow$ of $\gen{A_{i-1}b_i^0b_i^1d_i}$ into an age of $L$.
  Now take
  $r_i \in L \setminus
  \gen{A_{i-1}b_i^0b_i^1d_ie_\rightarrow^{\rho 0}(d_i)e_\rightarrow^{\rho 1}(d_i)}$.
  Use the similar argument to obtain
  the isomorphism
  $e_\leftarrow^{\rho j} : \gen{A_{i-1}b_i^0b_i^1d_ie_\rightarrow^{\rho 0}(d_i)e_\rightarrow^{\rho 1}(d_i) r_i}$ into an age of $L$.
  Finally, let $A_i = \gen{A_{i-1}b_i^jd_ie_\rightarrow^{\rho j}(d_i)e_\leftarrow^{\rho j}(r_i)}_{j < 2}$.

  By construction, $\{A_i\}_{i<\omega}$ is a chain
  such that $L = \bigcup_{i<\omega} A_i$.
  Moreover,
  for $\rho'$ extending $\rho$
  the isomorphism $\sigma^{\rho'}$ extends $\sigma^\rho$,
  and for $T \in 2^\omega$, $\sigma^T := \bigcup_{i < \omega} \sigma^{T \upharpoonright n}$
  is an automorphism of $L$.
  By construction, $\sigma^T\cdot y$ are all distinct
  and thus $|G \cdot y| = \continuum$.

  The general case can be made by having $n$ arrays
  corresponding to $\{b^j_i\}$ if $|Y| = n$
  essentially because a $n$-branching tree of a finite depth is finite.
\end{proof}

Recall that $\Aut_X(Y) = \{\sigma \in G \mid \operatorname{supp} \sigma \subseteq Y\} = \{ \sigma \in G \mid \sigma \upharpoonright (X \setminus Y) = \mathrm{id}_{X \setminus Y}\}$.

\begin{lem} \label{lem:adam}
  Let $Y \subseteq X$ be nonempty, closed,
  and nowhere dense in the spectral topology,
  and $U \subseteq X$ a basic open set intersecting $Y$ nontrivially.
  Then there is $\sigma \in \Aut_X(U)$ that does not fix $Y$ setwise.
\end{lem}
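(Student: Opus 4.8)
The plan is to fix a point $y \in Y \cap U$ and to produce $\sigma \in \Aut_X(U)$ with $\sigma(y) \notin Y$; this suffices, since $\supp\sigma \subseteq U$ forces $\sigma(Y) = (Y \setminus U) \cup \sigma(Y \cap U)$ with $\sigma(y) \in \sigma(Y \cap U) \setminus Y$, whence $\sigma(Y) \neq Y$. A nonempty basic open set of the spectral topology is of the form $\widehat a$ for some $0 \neq a \in L$, hence a nonempty clopen up-set; by Lemma~\ref{lem:homog} (and, $U$ being a clopen up-set, fact~(1) of \S~\ref{sec:preliminaries}), $U$ is an Esakia space dual to a skeletal interior algebra $A$ whose age has the superamalgamation property, so that the stationary independence relation and the inductive construction in the proof of Lemma~\ref{lem:trans} are available inside $U$. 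We identify the point $y$ with the corresponding prime filter of $A$.

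Next I would extract room from the nowhere-density hypothesis. Since $Y$ is closed, its spectral-topology closure is $\mathop{\downarrow}Y$, and the hypothesis says that $\mathop{\downarrow}Y$ contains no nonempty clopen up-set. As $\mathop{\downarrow}(Y \cap U) = \mathop{\downarrow}Y \cap U$, the set $V := U \setminus \mathop{\downarrow}(Y \cap U)$ is a nonempty up-set, open and dense in the spectral topology of $U$, and every clopen up-set $W \subseteq V$ satisfies $W \cap Y = \emptyset$. It therefore suffices to build $\sigma$ so that $\sigma(y) \in V$; dualising, this means that the automorphism of $A$ corresponding to $\sigma$ should carry some element $b$ of the prime filter $y$ to an element $c$ of $A$ with $\widehat c \subseteq V$ (for then $\widehat c \cap Y = \emptyset$ and $\sigma(y) \in \widehat c$).

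For the construction I would follow the template of the proof of Lemma~\ref{lem:trans}: enumerate $A$ and build, by simultaneous induction, an increasing chain $A_0 \subseteq A_1 \subseteq \cdots$ of finite substructures with $\bigcup_i A_i = A$ together with a coherent system of finite partial isomorphisms, invoking the Existence Axiom for the independence relation at each step to keep the partial data from forcing $\sigma(y)$ into $\mathop{\downarrow}Y$; at the step at which a chosen $c$ with $\widehat c \subseteq V$ enters the chain one adjoins the pair $(b, c)$, where $b$ is an element of the prime filter $y$ of the same quantifier-free type as $c$ over the relevant part of $A_i$, and one completes the back-and-forth using weak homogeneity. The resulting $\sigma \in \Aut_X(U)$ then satisfies $\sigma(y) \in \widehat c \subseteq V$, as required.

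I expect the main obstacle to be the rigidity imposed by the order. In contrast to the order-free (Boolean) situation treated by Truss, $\Aut_X(U)$ is far from transitive on $U$ --- automorphisms preserve order-theoretic invariants such as maximality --- so $y$ cannot simply be moved to an arbitrary point of the dense open set $V$. The crux is to choose the point $y \in Y \cap U$ and the clopen up-set $\widehat c \subseteq V$ compatibly, so that the type of $c$ is in fact realised by an element of the prime filter $y$; this is exactly the place where one needs not merely that $Y$ has empty spectral interior but the stronger fact that the up-set $V$ is spectrally dense, so that clopen up-sets disjoint from $Y$ occur below every open element of $A$. Once such a pair $b, c$ has been located the superamalgamation property guarantees the back-and-forth can always be continued, and the construction goes through.
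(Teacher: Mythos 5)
Your reduction and the extraction of room from nowhere-density are fine (modulo the harmless slip that $\mathop{\downarrow}(Y\cap U)=\mathop{\downarrow}Y\cap U$ only holds after intersecting the left-hand side with $U$, which is all you use). The problem is that the step you yourself call ``the crux'' is exactly the content of the lemma, and you leave it unproven. To run your back-and-forth you must produce $b\in y$ and $c$ with $\widehat c\subseteq V$ having the \emph{same quantifier-free type} (over $\0$, and at later stages over the finite base already committed), so that $b\mapsto c$ extends inside $\Aut_X(U)$. Spectral density of $V$ only gives, below every nonzero element of the dual algebra, some nonzero $c$ with $\widehat c\subseteq V$; being below an element says nothing about realizing its quantifier-free type, and since $1$-generated Heyting algebras are unboundedly large, the type of a single element is a genuinely strong constraint. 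Nothing in your sketch rules out that, for your chosen $y$, no element of the filter $y$ has its type realized by any $c$ under $V$ --- you assert ``once such a pair has been located'' the rest goes through, but no argument is given that such a pair exists, and locating it is precisely the order-rigidity obstacle you flag. In addition, your target is stronger than the lemma (steering a prescribed point $y$ into a prescribed spectrally open up-set $V$ by an automorphism supported in $U$), which makes the missing existence statement harder, not easier.

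The paper avoids this type-matching problem entirely: it never tries to move a designated point into a designated region. Working in the dual algebra $L'$ of $U$, it chooses a compact open $V$ with $U\cap Y\subseteq V$ and $V\in L'\setminus\{0,1\}$, uses the Existence Axiom for the stationary independence relation (available by Lemma~\ref{lem:homog}) to find $V'$ with $V'\equiv V$ and $V\indep[\0]V'$, and then, via the translation and the ultrahomogeneity of $L$, produces $\sigma\in\Aut_X(U)$ interchanging $V$ and $V'$ while fixing $V\cap V'$ setwise; since $Y\cap U\subseteq V$ and the swap carries $V\setminus V'$ off $V$, such a $\sigma$ cannot fix $Y$ setwise. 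The only type that has to be controlled there is that of the single element $V$ over the empty set --- no datum tied to the non-definable set $Y$ or to a particular prime filter ever has to be matched. If you want to salvage your route, you would have to supply the missing existence argument for the compatible pair $(b,c)$ (or switch, as the paper does, to moving a definable object containing $Y\cap U$ rather than a point of $Y$).
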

\begin{proof}
  Throughout the proof, we equip $X$ with the spectral topology.
  Note that since $U$ is compact, so is $U \cap Y$.
  There is a compact open subset $V$ of $U$ containing $U \cap Y$.
  Indeed, for each $y \in U \cap Y$ take a compact open nbhd $V_y$
  properly contained in $Y$.
  This can be done because
  $y$ cannot be $\{1\}$
  as $y$ is a prime filter of the dual of $U$;
  there are $a, a' \in L \setminus \{1\}$ with $a \vee a' = 1$.
  Now 
  apply the compactness of $U \cap Y$ to obtain $V$ the union of a finite subcover of $\{V_y\}$.
  If $L'$ is the dual Heyting algebra of $U$,
  we have $V \in L' \setminus\{0, 1\}$.
  By using Lemma~\ref{lem:homog},
  take $V' \subseteq L'$ such that $V \indep[\0] V'$ and $V \equiv V'$.
  Note that $\tp^{L'}(V'V(V \cap V'))/ \0)=\tp^{L'}(VV'(V\cap V') / \0)$.
  The translation appearing in the proof of Lemma~\ref{lem:homog}
  can be used to show $\tp^{L}(V'V / U)=\tp^{L}(VV' / U)$,
  where $U, V, V'$ are regarded as elements of $L$
  as they are compact open sets.
  By the ultrahomogeneity of $L$,
  there is $\sigma \in \Aut(L)$ such that $\sigma(U) = U$, $\sigma(V) = V'$,
  $\sigma(V') = V$, and $\sigma(V \cap V') = V \cap V'$.
  By construction, $\sigma$ satisfies the conclusion of the lemma.
\end{proof}

For $H \subseteq G$,
let $\Gamma(H)$ be the family of clopen up-sets $Y$ in $X$
such that $\Aut_X(Y) \le H$.

\begin{lem}\label{lem:ideal}
  Let $H \subseteq G$ be such that $|G : H| < \continuum$.
  If $Y, Z \in \Gamma(H)$ are not disjoint,
  then $Y \cup Z \in \Gamma(H)$.
\end{lem}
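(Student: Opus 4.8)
Since the union of two clopen up-sets is again a clopen up-set, the assertion $Y\cup Z\in\Gamma(H)$ unwinds to $\Aut_X(Y\cup Z)\le H$. So I fix an arbitrary $\sigma\in\Aut_X(Y\cup Z)$ and aim to show $\sigma\in H$. Write $W:=Y\cap Z$; by hypothesis $W\neq\emptyset$, and, being a nonempty clopen subset of the Cantor space $X$, it is infinite. Being up-sets, $Y$ and $Z$ force a rigid picture: the pieces $Y\setminus Z$ and $Z\setminus Y$ are mutually order-incomparable, $W$ sits above each of them wherever comparable, $X\setminus(Y\cup Z)$ sits below everything, and every element of $\Aut_X(Y\cup Z)$ fixes $X\setminus(Y\cup Z)$ pointwise while permuting $Y\setminus Z$, $Z\setminus Y$, and $W$ among themselves.

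I would follow Truss's scheme: assuming $\sigma\notin H$, manufacture $\continuum$ many pairwise $H$-inequivalent automorphisms, contradicting $|G : H|<\continuum$. The non-emptiness of $W$ is the crucial ingredient — it lets one transport clopen material from $Y\setminus Z$ through $W$ into $Z\setminus Y$ and back by composing automorphisms supported in $Y$ with automorphisms supported in $Z$, all of which lie in $H$ by hypothesis; these elementary moves are produced as genuine order-automorphisms of $X$ from the ultrahomogeneity of $L$ together with Lemma~\ref{lem:homog}, which realizes $W$, $Y$, and $Z$ as Esakia spaces of skeletal interior algebras whose ages superamalgamate, with Lemma~\ref{lem:partition} providing a ``scratch'' clopen up-set inside $W$ in which to perform the routing. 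Concretely I would run a tree-indexed back-and-forth: build pairwise disjoint clopen blocks $B_\rho$ ($\rho\in 2^{<\omega}$), each carrying a conjugate $\sigma_\rho$ of $\sigma$, so that for every branch $T\in 2^\omega$ the agglutinate $\widehat\sigma_T$ of $\{\sigma_{T\upharpoonright n}:n<\omega\}$ is a well-defined automorphism, and arrange the construction — using the room afforded by the kind of argument in the proof of Lemma~\ref{lem:trans} and the simplicity of each $\Aut_X(B_\rho)$ from Lemma~\ref{lem:homog} — so that distinct branches give $\widehat\sigma_T$ in distinct cosets of $H$; the $\continuum$ branches then contradict $|G : H|<\continuum$, whence $\sigma\in H$.

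The step I expect to be the main obstacle is the interplay of order and routing. In the Boolean (unordered) case one merely swaps clopen pieces, but here $Y\setminus Z$ and $Z\setminus Y$ are not up-sets and $\sigma$ need not preserve $W$, so each partial move must be built from scratch, with the superamalgamation property — the same one powering Lemma~\ref{lem:homog} — guaranteeing that a clopen piece can be transported across the $Y/Z$ interface while all of its order relations to the fixed part $X\setminus(Y\cup Z)$ and to the remaining pieces are preserved. The second delicate point, inside the tree construction, is to ensure that the portion of $\sigma$ surviving in each block genuinely obstructs membership in $H$ after agglutination, rather than being absorbed by the $\Aut_X(Y)$- and $\Aut_X(Z)$-corrections that accompany it; here the homogeneity of the $B_\rho$ is used to keep those corrections disentangled from the surviving part across all the blocks simultaneously.
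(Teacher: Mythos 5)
Your strategy is not the one the lemma needs, and the divergence hides a genuine gap. The paper's proof (following Truss's Lemma~2.3) is a direct factorization, not a counting argument: given $\sigma\in\Aut_X(Y\cup Z)$, one finds a nonempty clopen up-set inside $W=Y\cap\sigma^\inv(Y)\cap Z$ (note: not just $Y\cap Z$ --- one needs the region to be sent back into $Y$ by $\sigma$), then builds $\phi\in\Aut_X(Y)$ exchanging a clopen set containing $Y\setminus W$ with some clopen $T\subseteq W$, and $\theta\in\Aut_X(Y)$ with $\theta\upharpoonright T=\sigma\upharpoonright T$ (both produced by the translation/ultrahomogeneity technique of Lemmas~\ref{lem:homog} and~\ref{lem:adam}); then $\sigma=\theta\phi(\phi^\inv\theta^\inv\sigma\phi)\phi^\inv$, the conjugated middle factor is supported in $Z$, and every factor lies in $H$ because $\Aut_X(Y),\Aut_X(Z)\le H$. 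Thus $\sigma\in H$ outright; the index hypothesis plays only a marginal role.

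Your plan --- assume $\sigma\notin H$ and manufacture $\continuum$ many agglutinates of conjugates of $\sigma$ lying in pairwise distinct cosets --- has no mechanism for the crucial step, and none is available at this point. The only information about $H$ is that it has small index and contains $\Aut_X(Y)$ and $\Aut_X(Z)$; $H$ is not normal, so conjugates of $\sigma$ need not avoid $H$, and products or agglutinates of such conjugates (together with your ``corrections'' from $\Aut_X(Y)$ and $\Aut_X(Z)$, which all lie in $H$) can perfectly well land in $H$. In the proof of Theorem~\ref{thm:ssip} the analogous coset-counting succeeds only because $H$ setwise fixes a concrete set $X^\dagger$ and the constructed automorphisms visibly fail to preserve it; here you exhibit no $H$-invariant that your branch automorphisms would violate. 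You flag exactly this (``ensure that the portion of $\sigma$ surviving in each block genuinely obstructs membership in $H$'') as a delicate point, but it is the entire content of the lemma, and the sketch does not supply it. Two smaller inaccuracies: your claim that every $\sigma\in\Aut_X(Y\cup Z)$ permutes $Y\setminus Z$, $Z\setminus Y$, and $W$ among themselves is false (as you yourself later note, $\sigma$ need not preserve $W$), and taking $W=Y\cap Z$ rather than $Y\cap\sigma^\inv(Y)\cap Z$ would already derail the factorization that actually proves the statement.
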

\begin{proof}
  Let $Y, Z$ be as in the hypothesis and $\sigma \in \Aut_X(Y \cup Z)$.
  Let $W = Y \cap \sigma^\inv(Y) \cap Z$.
  As in \cite[Lemma 2.3]{TRUSS1989494}, one can assume
  that in particular, it contains a nonempty clopen up-set.
  If $Y \subseteq Z$ or $\sigma = \mathrm{id}_X$, then there is nothing to show.
  Thus we assume not.
  This implies the clopen set $Y \setminus W$ is nonempty either.
  Thus we may take a clopen subset $T' \subseteq Y$ containing $Y \setminus W$.
  Applying an argument similar to one in the proof of Lemma~\ref{lem:adam}
  to the dual skeletal interior algebra of $Y$,
  we obtain $\phi \in \Aut_X(Y)$ exchanging $T'$ and $T$ for some
  clopen $T \subseteq W$.
  Note that $\phi$ exchanges $Y \setminus Z$ with some subset of $T$.
  Now consider the inclusion map $Y \hookrightarrow Y \cup Z$.
  This is an Esakia space morphism.
  Its dual $\pi$ is a (surjective) Heyting algebra homomorphism,
  and we have $\pi(T'') = T''$ for all clopen up-sets $T'' \subseteq Y$,
  where $T''$ on the left-hand side and on the right-hand side are an element of $\dom \pi$ and $\ran \pi$, respectively.
  Since $\pi$ is a homomorphism, we see that that the quantifier-free types of
  $T$ and $\sigma(T)$ in $\ran \pi$ are identical
  as those in $\dom \pi$ are already identical
  (note that $\sigma$ can be identified with an automorphism on $\dom \pi$).
  By the ultrahomogeneity of $\ran \pi$,
  and a reasoning via translation as above,
  there is $\theta \in \Aut_X(Y)$
  such that $ \theta \upharpoonright T = \sigma \upharpoonright T$.
  One can argue as in \cite[Lemma 2.3]{TRUSS1989494}
  that $\sigma = \theta\phi(\phi^\inv\theta^\inv\sigma\phi)\phi^\inv \in H$.
\end{proof}

\begin{thm}\label{thm:ssip}
  $G$ has the strong small index property.
\end{thm}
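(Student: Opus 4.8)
The plan is to adapt Truss's argument~\cite{TRUSS1989494} via the family $\Gamma(H)$. Fix a subgroup $H \le G$ with $|G:H| \le \aleph_0$, hence $|G:H| < \continuum$, and write $G_{(P)}$ and $G_{\{P\}}$ for the pointwise and setwise stabilizers of a set $P \subseteq L$. I must produce a finite $P$ with $G_{(P)} \le H \le G_{\{P\}}$. It suffices to construct pairwise disjoint nonempty clopen up-sets $Y_1,\dots,Y_r$ with $Y_1\cup\dots\cup Y_r = X$ such that $\Aut_X(Y_k)\le H$ for each $k$ and $H$ permutes $\{Y_1,\dots,Y_r\}$: then, setting $P=\{Y_1,\dots,Y_r\}\subseteq L$, the group $G_{(P)}$ is exactly the set of automorphisms fixing each $Y_k$ setwise, and any such automorphism is the agglutinate of its restrictions to the blocks, each of which lies in some $\Aut_X(Y_k)\le H$, so $G_{(P)}\le H$; while $G_{\{P\}}$ is exactly the set of automorphisms permuting $\{Y_1,\dots,Y_r\}$, so $H\le G_{\{P\}}$ once we know $H$ permutes the blocks. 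Throughout I will use that $\Gamma(H)$ is downward closed among clopen up-sets, is invariant under conjugation by $H$ (because $\Aut_X(hY)=h\,\Aut_X(Y)\,h^\inv$), and is closed under unions of non-disjoint members by Lemma~\ref{lem:ideal}.

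The heart of the matter is to show that $\Gamma(H)$ is \emph{abundant}: $\bigcup\Gamma(H)=X$. Suppose not, so $Z:=X\setminus\bigcup\Gamma(H)$ is nonempty; it is closed, and $H$-invariant since $\bigcup\Gamma(H)$ is. \emph{First case:} $Z$ contains a nonempty spectral-open set $\widehat a$, so no nonempty clopen up-set below $a$ belongs to $\Gamma(H)$. Using Lemma~\ref{lem:partition} I would carve out of $\widehat a$ a countable family of pairwise disjoint nonempty clopen up-sets $W_i$, none in $\Gamma(H)$, pick $g_i\in\Aut_X(W_i)\setminus H$, and --- by a back-and-forth tree as in the proof of Lemma~\ref{lem:trans}, or by agglutination --- assemble continuum-many automorphisms out of the $g_i$, with the aim that they occupy continuum-many cosets of $H$, contradicting $|G:H|<\continuum$. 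Obtaining genuinely many cosets is the subtle step --- and the one I expect to be the main obstacle: it does not follow from $g_i\notin H$ alone, and, in the manner of Truss's Lemma~2.2~\cite{TRUSS1989494}, one exploits the simplicity of each $\Aut_X(W_i)$ (Lemma~\ref{lem:homog}) to rework the $g_i$ into products of conjugates that can be ``localised'' within individual blocks $W_i$, which breaks the combinatorial obstruction and forces continuum-many cosets. \emph{Second case:} $Z$ is nowhere dense in the spectral topology. Since $Z$ is $H$-invariant, $|G:G_{\{Z\}}|\le|G:H|\le\aleph_0$; but a tree construction like that in the proof of Lemma~\ref{lem:trans}, now using Lemma~\ref{lem:adam} to supply at each stage --- near a chosen point of $Z$ --- an automorphism of a small basic open set that does not fix $Z$ setwise, produces continuum-many $G$-translates of $Z$, a contradiction. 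Either way $Z=\0$, so $\bigcup\Gamma(H)=X$.

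The remainder is assembly. Consider the connected components of $\Gamma(H)\setminus\{\0\}$ under the graph relation ``$c\cap c'\ne\0$''; the union of (the sets in) a component $\mathcal C$ is a clopen up-set lying in $\Gamma(H)$, since that union is compact, hence already covered by finitely many members of $\mathcal C$, and repeated application of Lemma~\ref{lem:ideal} along connecting chains exhibits the corresponding element of $L$ as a member of $\Gamma(H)$. Distinct components have disjoint unions, and these unions cover $\bigcup\Gamma(H)=X$; being pairwise disjoint nonempty open subsets covering the compact space $X$, there are only finitely many, with unions $Y_1,\dots,Y_r$. Thus $X=Y_1\sqcup\dots\sqcup Y_r$ with each $Y_k\in\Gamma(H)$, giving $G_{(P)}\le H$ for $P=\{Y_1,\dots,Y_r\}$ as above. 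Finally, conjugation by $H$ acts on $\Gamma(H)$ preserving the relation ``$c\cap c'\ne\0$'', hence permutes the components and so permutes $\{Y_1,\dots,Y_r\}$; therefore $H\le G_{\{P\}}$. Hence $G_{(P)}\le H\le G_{\{P\}}$, and $G$ has the strong small index property.

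Summing up, I expect essentially all the difficulty to be concentrated in the abundance step of the second paragraph --- the coset-counting that turns ``$\Aut_X(W_i)\not\le H$ for all $i$'' into continuum-many cosets of $H$, and the parallel continuum-of-translates argument ruling out a nowhere dense remainder $Z$ --- while the decomposition and the two inclusions in the third paragraph are routine consequences of Lemmas~\ref{lem:homog}, \ref{lem:partition}, \ref{lem:ideal}, and compactness.
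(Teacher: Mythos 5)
Your proposal is correct in outline and takes essentially the same route as the paper: Truss's method applied to $\Gamma(H)$, with abundance proved via the Truss Lemma~2.2-style claim together with Lemma~\ref{lem:partition} to rule out spectral interior in the remainder, Lemma~\ref{lem:trans} and Lemma~\ref{lem:adam} with agglutination to rule out a nonempty remainder, and the final assembly via Lemma~\ref{lem:ideal} and compactness, phrased with connected components of the intersection graph where the paper takes the maximal elements of $\Gamma(H)$. The only notable difference is presentational: the paper splits the remainder case explicitly into a finite subcase (Lemma~\ref{lem:trans} directly) and an infinite subcase (disjoint basic opens near a cluster point plus Lemma~\ref{lem:adam} and agglutinates), which your single ``nowhere dense'' case compresses.
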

\begin{proof}
  The proof will be an adaptation of the argument in Truss~\cite{TRUSS1989494}.
  Let $H \le G$ be of a countable index.
  We show that $H$ is open by examining $\Gamma := \Gamma(H)$.

  First, observe that if $(Y_n)_{n \in \omega}$ is a family of clopen up-sets
  in $X$
  such that for each family of automorphisms in $\prod_{n <\omega}\Aut_X(Y_n)$
  its agglutinate is in $\Aut(X)$,
  then $Y_n\in\Gamma$ for some $n < \omega$.
  This is proved like \cite[Lemma 2.2]{TRUSS1989494}
  from the hypothesis that $|G : H| < 2^\omega$.

  Let $X^* = \bigcup \Gamma$.
  If $X \setminus X^*$ contains a clopen up-set $Y_0$,
  then by the universality of $L$,
  one may construct nonempty clopen up-sets $Y_1, Z_1, Y_2, Z_2, \dots$
  such that $Y_n \cap Z_n = \0$, $Y_{n+1}, Z_{n+1} \subseteq Y_n$
  for $n = 1, 2, \dots$.
  Indeed, given a clopen up-set $Y_n$,
  we may apply Lemma~\ref{lem:partition} to $Y_n$ to obtain $Y_{n+1}$ and $Z_n$.
  The family~$(Z_n)_{n < \omega}$ satisfies the hypothesis
  of the claim in the preceding paragraph.
  This contradicts the definition of $X^*$.
  We see that $X^*$ is dense in $X$ in the spectral topology.
  Since $H$ fixes $X^*$ setwise,
  it fixes $X^\dagger := X \setminus X^*$ as well.

  Suppose $X^\dagger \neq \0$ by way of contradiction.
  Suppose further that $X^\dagger$ is finite.
  Then by Lemma~\ref{lem:trans},
  $|G \cdot X^\dagger| = \continuum$.
  Since $X^\dagger$ is fixed setwise by $H$, we obtain the inequality
  \[
    |G : H| \ge |G : G_{\{X^\dagger\}}| = |G \cdot X^\dagger| = \continuum,
  \]
  which contradicts the assumption.
  Hence $X^\dagger$ is infinite (under the assumption that it is nonempty).

  Since $X$ is compact and $X^\dagger$ is closed,
  we may take a basic open $U \subseteq X$ around a cluster point of $X^\dagger$,
  where the topological notions refer to the spectral topology.
  The subset $U$ is a clopen up-set in the patch topology,
  so Lemma~\ref{lem:homog} is applicable.
  Note that $U \cap X^\dagger$ is infinite.
  Let $H$ be the dual of the Esakia space $U$.
  By assumption, at least one (say, $U_0'$) of the sets
  obtained by applying Lemma~\ref{lem:partition}
  contains
  infinitely many elements of $X^\dagger$.
  Let $U_0$ be the other open set.
  Repeat this construction with $U$, $U_0'$, and $U_0$ replaced
  with $U_i$, $U_i'$, and $U_i$, respectively ($i <\omega$).
  Then for every $i < \omega$ there is $j \ge i$ such that $U_j \cap X^\dagger \neq \0$,
  for otherwise $|U'_i| = 1$.
  By taking a subsequence, one may assume that $U_i \cap X^\dagger \neq \0$
  for every $i < \omega$.
  Let $\sigma_i \in \Aut_{X}(U_i)$ be as in the conclusion of Lemma~\ref{lem:adam}
  applied to $U_i$.
  The agglutinate $\sigma_T \in G$ of
  $\sigma_i$ ($i \in T$) and $\mathrm{id}_{U_i}$ ($i \not \in T$)
  exists for
  each $T \subseteq \omega$.
  These are $2^{\aleph_0}$ distinct automorphisms.
  Since $|G : H| \le 2^{\aleph_0}$, there are $T \neq T' \subseteq \omega$
  such that $\sigma_T{\sigma_{T'}}^\inv \in H$.
  By construction, $\sigma_T{\sigma_{T'}}^\inv$ does not preserve $X^\dagger$,
  contrary to the previous observation.
  We have shown that $X^* = X$.
  By arguing the same way as Truss~\cite[Theorem 3.7]{TRUSS1989494},
  each $Y \in \Gamma$ has a unique extension in
  $A := \max \Gamma$, the set of maximal elements of $\Gamma$
  with respect to set inclusion,
  which
  is a pairwise disjoint subfamily
  covering $X$.
  We conclude that $G_{A} = \prod_{Y \in A} \Aut_X(Y)\le H$.

  The \emph{strong} small index property follows from the definability of $A$  from $H$ in the proof above.
  In fact, $A$ is the (finite) set of maximal elements of $\Gamma(H)$,
  so it is fixed by $H$ setwise.
\end{proof}

\begin{cor}\label{cor:wei}
  $L$
  has the DLCF~\cite{casanovas04:_weak}
  (i.e., every definable relations has the least definably closed set of parameters
  that defines it)
  but does not admit the elimination of imaginaries.
\end{cor}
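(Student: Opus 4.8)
The plan is to obtain both assertions from Theorem~\ref{thm:ssip} together with the standard fact that, $L$ being the \FR{} limit of a class with the strong amalgamation property, one has $\mathrm{acl}(B) = \dcl(B) = \gen{B}^L$ for every $B \subseteq L$ (for finite $B$ this is the usual characterisation of strong amalgamation, and it follows for arbitrary $B$ since membership in $\mathrm{acl}(B)$ is witnessed by a formula over a finite subtuple of $B$; in particular $\gen{B}^L$ is \emph{finite} for finite $B$, as it then lies in $\mathrm{Age}(L)$). For the DLCF part: given a relation $R$ definable over a finite tuple of $L$, let $H \le G$ be its setwise stabiliser, which has countable index because $L$ is countable. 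By Theorem~\ref{thm:ssip} and its proof there is a finite family $A$ of pairwise disjoint clopen up-sets covering $X$ with $G_A = \prod_{Y \in A}\Aut_X(Y) \le H$ and with $H$ fixing $A$ setwise. The members of $A$, being clopen up-sets, are the sets $\widehat a$ for the elements $a$ of a finite $A_0 \subseteq L = B(L)^\circ$; since distinct members of $A$ are order-incomparable in $X$, an automorphism of $X$ lies in $G_A$ precisely when it fixes $A_0$ pointwise, so $G_A$ is the pointwise stabiliser of $A_0$ and $H$ lies in the setwise stabiliser of $A_0$. I claim $\dcl(A_0) = \gen{A_0}^L$ is the least definably closed set of parameters defining $R$: it defines $R$ because its pointwise stabiliser equals $G_A \le H$; and it is least because, for any definably closed $B \subseteq L$ over which $R$ is defined, the pointwise stabiliser of $B$ lies in $H$, hence in the setwise stabiliser of the finite set $A_0$, hence permutes $A_0$, so that each element of $A_0$ has finite orbit over $B$ and thus lies in $\mathrm{acl}(B) = \dcl(B) = B$; therefore $\dcl(A_0) \subseteq B$.

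For the failure of elimination of imaginaries, I would exhibit an imaginary admitting no real code. Apply Lemma~\ref{lem:partition} with $Y = X$ to obtain a nonempty clopen up-set $c \subsetneq X$ with nonempty $L$-complement $c' = \neg c$; then $\{0, c, c', 1\}$ is the four-element Boolean subalgebra of $L$ generated by $c$, the transposition exchanging $c$ and $c'$ is an automorphism of it, and by ultrahomogeneity it extends to some $g \in \Aut(L)$ with $g(c) = c' \neq c$. Let $e \in L^{\mathrm{eq}}$ be the canonical parameter of the unordered pair $\{c, c'\}$ (equivalently, of the definable set $\{y \mid y \le c \vee y \le c'\}$, whose setwise stabiliser is exactly the stabiliser of $\{c, c'\}$). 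Were $L$ to admit elimination of imaginaries, $e$ would be interdefinable with a finite tuple $\bar b$ from $L$; then $\bar b \in \dcl^{\mathrm{eq}}(e)$ is fixed by $g$, whereas $e \in \dcl^{\mathrm{eq}}(\bar b)$ forces every automorphism fixing $\bar b$ to preserve $\{c, c'\}$, so $c$ has at most two conjugates over $\bar b$, whence $c \in \mathrm{acl}(\bar b) = \dcl(\bar b)$. But then $g$ fixes $c$, contradicting $g(c) = c'$.

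I expect the only genuinely delicate points to be bookkeeping: verifying, under Esakia duality, that the pointwise (resp.\ setwise) stabiliser of the finite partition $A$ of $X$ matches that of the corresponding finite subset of $L$ --- which rests on distinct members of $A$ being order-incomparable, so that extending an automorphism of one member by the identity on the others is again an Esakia automorphism --- and pinning down the strong-amalgamation fact $\mathrm{acl} = \dcl = \gen{\cdot}$, which is exactly what lets a permutation of a finite parameter set be absorbed in each argument. Everything else is a direct assembly of Theorem~\ref{thm:ssip} and Lemma~\ref{lem:partition}.
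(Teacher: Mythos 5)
Your proposal takes a genuinely different route from the paper on both halves, and most of it is sound, but one step of your DLCF argument is a real gap as written. The paper gets the DLCF in two lines: the strong small index property implies weak elimination of imaginaries (a least \emph{algebraically} closed defining set exists for every definable relation), and strong amalgamation of $\mathrm{Age}(L)$ gives $\mathrm{acl}=\dcl=\gen{\cdot}$ over finite sets, upgrading this to the DLCF. You instead unwind the proof of Theorem~\ref{thm:ssip}: you pass to the canonical partition $A=\max\Gamma(H)$, translate it into a finite $A_0\subseteq L$ with $G_{(A_0)}=\prod_{Y\in A}\Aut_X(Y)\le H\le G_{\{A_0\}}$ (this bookkeeping is fine: each member of $A$ is simultaneously an up-set and a down-set, so restrictions extended by the identity are again automorphisms), and then assert that $\gen{A_0}$ \emph{defines} $R$ ``because its pointwise stabiliser lies in $H$''. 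That inference --- invariance of $R$ under $G_{(A_0)}$ implies definability of $R$ over $A_0$ --- is Svenonius/Ryll-Nardzewski--type Galois descent, which is automatic only under $\omega$-categoricity or saturation, and $L$ is neither. It does hold here, but for a reason you must supply: $L$ is locally finite and ultrahomogeneous, so the $\Aut(L/A_0)$-orbit of the finite parameter tuple $\bar c$ of $R$ is cut out by the quantifier-free diagram of $\gen{A_0\bar c}$ over $\gen{A_0}$, and quantifying that diagram out in front of the defining formula yields a definition of $R$ over $\gen{A_0}$. The same caveat applies to your minimality step: ``finite orbit over $B$ implies $\mathrm{acl}(B)$'' should be run over the finite tuple of $B$ over which $R$ is actually defined, and then closed by the standard characterisation of strong amalgamation via infinite orbits. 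The paper avoids all of this by quoting the implication from the strong small index property to weak elimination of imaginaries.

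Your refutation of elimination of imaginaries is correct and uses a different witness than the paper's: you take a complemented $c\in L\setminus\{0,1\}$ from Lemma~\ref{lem:partition}, extend the swap of $c$ and $\neg c$ on the four-element subalgebra $\gen{c}$ to some $g\in\Aut(L)$ by ultrahomogeneity, and show the canonical parameter of $\{c,\neg c\}$ admits no real code, again via $\mathrm{acl}=\dcl=\gen{\cdot}$ over a finite tuple. The paper instead takes the atoms $a,b,c$ of an $8$-element Boolean subalgebra and shows by an explicit $\dcl$ computation that $G_{\{a,b\}}$ is not the pointwise stabiliser of any finite set. Both rest on the same ingredients (ultrahomogeneity plus the strong-amalgamation identification of $\dcl$ with generated subalgebras), so yours is a legitimate, arguably slicker, alternative; just state explicitly the Galois-theoretic facts over finite parameter sets, since they are not free of charge outside the $\omega$-categorical setting.
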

\begin{proof}
  Since $\Aut(L)$ has the strong small index property,
  $L$ has the weak elimination of imaginaries,
  i.e., every definable relations has the least algebraically closed set of
  parameters that defines it.
  Since $\mathrm{Age}(L)$ has the strong amalgamation property,
  definable closures and algebraic closures are the same in $L$.
  On the other hand, the proof of the Theorem
  shows that $L$ does not have codes for finite set of tuples.
  Indeed, let $\{a, b, c\}$ be the atoms of a subalgebra $A_0$ of $L$
  that is isomorphic to the 8-element Boolean algebra.
  Consider $H := G_{\{a,b\}}$.
  Then, $G_{(A)} \le H$
  where $A = \{a, b, c\}$
  (in fact, $\max \Gamma (H) = A$).
  Suppose that there is a finite set $A' \subseteq L$ such that $G_{(A')} = H$.
  Then since $G_{(A)} \le G_{(A')}$, we have $A' \subseteq \dcl(A)$.
  By the strong amalgamation property of the age of $L$,
  the model-theoretic notion of definable closures
  and the universal-algebraic notion of generated subalgebras
  coincide,
  so $\dcl(A) = A_0$.
  Since some element of $H$ switches $a, b$,
  neither $a$ nor  $b$ are $A'$-definable,
  so $a, b, \neg a, \neg b \not \in A'$.
  But for every such $A' \subseteq A_0$, the difference $G_{(A')} \setminus H$
  is nonempty.
\end{proof}

\section{Concluding remarks}
We conclude this article by discussing a newer and more systematic method
for the small index property of ultrahomogeneous structures
and its applicability in the \FR{} limit $L$ of finite Heyting algebras.
The method concerns the \emph{extension property for partial automorphisms} (EPPA) of a \FR{} class $\mathcal K$,
also known as the \emph{Hrushovski property}.
The EPPA  can often be used to prove the existence of \emph{ample generics}
in the automorphism group of the \FR{} limit of $\mathcal K$,
which in turn implies the small index property \cite{kechris06:_turbul_amalg_gener_autom_homog_struc}.
For instance, this is how the small index property was proved for every \emph{free} countable ultrahomogeneous structure \cite{siniora_solecki_2020}
(here a structure is free if each lower amalgam of inclusion maps between elements of its age is completed by the union of their ranges).
However,
this was not the way we proceeded in this article for the following reason.
As in \cite[Proposition~1.5.14]{siniora17:_autom_group_homog_struc}
a countable $\omega$-categorical locally finite ultrahomogeneous
structure contains an infinite chain,
then its age cannot have the EPPA.
Our structure $L$ is not $\omega$-categorical \emph{per se},
but, as we see in the following,
its age does not have the EPPA.
Indeed, there exist $a, b \in L$
such that $a \equiv_\0{} b$ and $a < b$.
For instance, let $\{0 < a < b < 1\}$ be a copy of the 4-chain in $L$,
and check which equations are true of each of $a$ and $b$
by using $\neg a = \neg b = 0$.
Consider then the partial automorphism $\phi: a \mapsto b$ of  $\langle ab \rangle^L$.
If the age of $L$ has the EPPA,
there is a finite structure $B \subseteq L$ and an automorphism $f : B \to B$
extending $\phi$,
and \[a < b = f(a) < f(b) = f(f(a)) < f(f(b)) = f(f(f(a))) < \cdots\] is an infinite chain in $B$.
This is absurd.

It remains an important open question whether $\Aut(L)$ has ample generics
even if not via the EPPA.
\subsection*{Acknowledgment}
The author expresses gratitude to the people at the Czech Academy of Sciences who read this manuscript and provided me with useful suggestions.
In particular, Adam Barto\v{s} pointed out gaps present in my original argument.
The author also thanks the anonymous reviewer for suggesting many improvements on the matter of exposition.
\printbibliography
\end{document}